\numberwithin{equation}{section}
\newtheorem{theorem}{Theorem}[section]
\newtheorem{proposition}[theorem]{Proposition}
\newtheorem{lemma}[theorem]{Lemma}
\newtheorem{conjecture}[theorem]{Conjecture}
\theoremstyle{definition}
\theoremstyle{remark}
\newtheorem{remark}[theorem]{Remark}
\newcommand{\ShortExactSeq}[3]%
{0\to{#1}\to{#2}\to{#3}\to 0}
\newcommand{\DShortExactSeq}[3]%
{\xymatrix@1{0\ar[r]&{{#1}}\ar[r]&{{#2}}\ar[r]&{{#3}}\ar[r]&0}}
\newcommand{\R}{\mbox{$\Bbb R$}}
 \renewcommand{\phi}{\varphi}
\renewcommand{\epsilon}{\varepsilon}
\newcounter{ritmctr}
{\end{itemize}}
\newcounter{aitmctr}
{\end{itemize}}
\begin{document}

\title{A remark on the real coarse Baum-Connes conjecture}

\author{John Roe}
\address{Department of Mathematics, Penn State University, University
Park PA 16802}
\date{\today}
\maketitle

\section{Introduction}
Let $X$ be a complete Riemannian manifold.  The \emph{coarse assembly map} \cite{higson_baum-connes_1995, higson_analytic_2000} is a homomorphism from the locally finite complex $K$-homology of $X$ to the complex $K$-theory of the Roe algebra $C^*(X)$.  To emphasize the words ``complex'' and ``locally finite'', which are usually implicit, we write this as
\[ A\colon KU^{lf}_*(X) \to KU_*(C^*(X)). \]
 This homomorphism sends the homology class of a generalized Dirac operator $D$ to the coarse index of $D$.  In particular, if $X$ is spin and has uniformly positive scalar curvature, then  the fundamental class of $X$ (which is simply the homology class of the spinor Dirac operator) maps to zero under the coarse assembly map.

In the literature, the coarse assembly map is usually defined in \emph{complex} $K$-theory.  It is ``well known'' to experts that the statements above also hold in real K-theory, but this fact does not seem to be stated explicitly. The purpose of this note is to document the construction of the real coarse assembly map and associated versions of  the Baum-Connes conjecture and the vanishing theorem.  This result is needed in A.~Dranishnikov's approach to Gromov's conjecture relating positive scalar curvature to macroscopic dimension.~\cite{dranishnikov_macroscopic_2013}

\section{The assembly map}

Let $X$ be a complete Riemannian manifold and let $H=L^2(X)$ be the \emph{real} Hilbert space of square-summable real-valued functions on $X$.  We may define the real $C^*$-algebras $C^*(X)$ and $D^*(X)$ in the usual way: $D^*(X)$ is the real $C^*$-algebra of operators on $H$ generated by the finite propagation, pseudolocal operators, and $C^*(X)$ is the ideal generated by the finite propagation, locally compact operators.

\begin{remark} Notice that the complexifications of these real versions of $D^*(X)$ and $C^*(X)$ are the usual complex versions. \end{remark}

\begin{lemma} There is a natural isomorphism
\[ KO_{i+1}(D^*(X)/C^*(X)) \to KO_i^{lf}(X). \]
\end{lemma}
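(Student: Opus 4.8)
The plan is to prove the real analogue of Paschke duality and then to check that the complex argument (see Higson--Roe~\cite{higson_analytic_2000}) carries over once the real Kasparov machinery is available. It is convenient to isolate two statements. Write $\overline{D}^*(X)$ and $\overline{C}^*(X)$ for the real algebras of pseudolocal and of locally compact operators on $H=L^2(X)$, now \emph{without} the finite propagation requirement. Then (a) $KO_{i+1}(\overline{D}^*(X)/\overline{C}^*(X)) \cong KO_i^{lf}(X)$ is real Paschke duality, and (b) the inclusion induces an isomorphism $KO_*(D^*(X)/C^*(X)) \to KO_*(\overline{D}^*(X)/\overline{C}^*(X))$; composing the two gives the lemma, and naturality is visible throughout.

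For (a) I would write down the natural map directly, in the direction asserted. A class in $KO_{i+1}(\overline{D}^*(X)/\overline{C}^*(X))$ is represented by a projection, or a unitary, over this quotient; any lift of it to $\overline{D}^*(X)$ is a matrix $F$ of pseudolocal operators with $F^2-1$ and each commutator $[F,f]$, $f\in C_0(X)$, locally compact. That is precisely a real Fredholm module (equivalently, a Kasparov module) for $C_0(X)$, and carrying along the $\Cliff_i$-structure implicit in the degree $i+1$ produces a cycle for the Kasparov realization of $KO_i^{lf}(X)$. For the inverse one starts from an arbitrary real Fredholm module, absorbs it into the ample representation of $C_0(X)$ on $H=L^2(X)$ by the real stabilization theorem, and passes to the phase of the operator; that this is well defined and inverse to the map just described is exactly the content of real Paschke duality, proved verbatim as in the complex case. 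For (b) one uses the standard partition-of-unity smoothing: given a pseudolocal $F$ and a suitable locally finite cover $\{\varphi_j\}$, the operator $\sum_j \varphi_j^{1/2} F \varphi_j^{1/2}$ has finite propagation and differs from $F$ by a locally compact operator, and a straight-line homotopy inside the quotient shows it represents the same class. Functoriality for coarse maps of $X$ and compatibility with the $8$-fold Clifford periodicity are then formal.

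The main obstacle is bookkeeping rather than a new idea: one must confirm that the real-analytic inputs are genuinely available and that the index conventions match --- that real $KKO$-theory classifies real Fredholm modules with the expected $\Cliff_i$-periodicity, that the stabilization/absorption theorem holds for the countably generated real $C_0(X)$-Hilbert modules that occur, and that the Eilenberg-swindle vanishing $KO_*(\Bounded(H))=0$ holds for the real $\Bounded(H)$, so that the relevant boundary maps in the $K$-theory long exact sequences really are the comparison maps one wants. All of these do hold, which is why the statement is ``well known''. If one wishes to avoid Paschke duality, there is an axiomatic alternative: both sides are locally finite homology theories with $KO$-coefficients (excisive, with a real Mayer--Vietoris sequence), and on a bounded space --- where, for an ample $H$, $D^*(X)/C^*(X)$ is the real Calkin algebra $\Calkin$ and $KO_{i+1}(\Calkin) \cong KO_i(\Compacts) = KO_i(\R)$ --- they agree; a uniqueness-of-homology-theory argument then finishes the proof.
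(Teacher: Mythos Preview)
Your two-step decomposition --- (a) real Paschke duality for the unconstrained quotient $\overline{D}^*(X)/\overline{C}^*(X)$, then (b) comparison with $D^*(X)/C^*(X)$ --- is exactly the paper's approach; the paper simply cites \cite{roe_paschke_2004} for (a) and the sheaf-theoretic argument of \cite{roe_sheaf_2012} for (b). The only difference is that for (b) the paper invokes the stronger fact that the forgetful map $D^*(X)/C^*(X)\to {\frak D}(X)/{\frak C}(X)$ is already an \emph{isomorphism of $C^*$-algebras}, whereas your partition-of-unity smoothing is a more hands-on route to the weaker (but sufficient) $K$-theory statement.
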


To prove this consider the real $C^*$-algebras ${\frak C}(X)$ and ${\frak D}(X)$ of \emph{all} locally compact and pseudolocal operators on $X$, respectively.  There is a natural forgetful map
\[ D^*(X)/C^*(X) \to {\frak D}(X)/{\frak C}(X) . \]
The sheaf-theoretic arguments of~\cite{roe_sheaf_2012} (see section 3.1 in that paper) carry through without change to show that this forgetful map is an \emph{isomorphism} of $C^*$-algebras.  On the other hand, the results of~\cite{roe_paschke_2004} show that there is a Paschke duality isomorphism
\[ KO_{i+1}({\frak D}(X)/{\frak C}(X)) \cong KO_i^{lf}(X). \]
Combining these we get the lemma.

Now consider the short exact sequence of real $C^*$-algebras
\[ 0 \to C^*(X) \to D^*(X) \to D^*(X)/C^*(X) \to 0. \]
Using the lemma, the boundary map in real $K$-theory coming from this short exact sequence gives a homomorphism
\[ KO_i^{lf}(X) \to KO_i(C^*(X)). \]
This is the coarse assembly map in the real case.

\begin{remark} By Voiculescu's theorem~\cite{voiculescu_non-commutative_1976}, we may replace $L^2(X)$ here by $L^2(X;V)$, where $V$ is any orthogonal vector bundle over $X$, and obtain the same assembly map. In addition, the bundle $V$ may be graded (see~\cite{roe_paschke_2004}). \end{remark}

\section{The vanishing theorem}
Let us recall how the homology class of the Dirac operator is defined.  Let $X$ be a Riemannian, spin $n$-manifold. Let $C$ denote the Clifford algebra of $\R^{n,0}$.  Let $S$ denote the canonical $C$-Dirac bundle over $X$ \cite[Section II.7]{lawson_spin_1990}, and let $D$ be its Dirac operator.  We consider the graded Hilbert space $H=L^2(S)$.

The complex version of the following result can be found in many places (for example \cite{higson_analytic_2000, roe_positive_2012}).  The real version is proved in essentially the same way; there is one change which we will explain in a moment.

\begin{lemma} In the above situation, let $f$ be a real-valued continuous function on $\R$ that has limits at $\pm\infty$. Then $f(D)\in D^*(X)^C$, and if $f$ is even (or odd) as a function on $|R$, then $f(D)$ is even (or odd) as an operator on $H$.  Moreover if $f$ tends to 0 at $\pm\infty$ then $f(D)\in C^*(X)^C$.  (The superscript $C$ denotes the part of the algebras that commutes with the right action of $C$ on $H$; in other words, $f(D)$ is $C$-linear.) \end{lemma}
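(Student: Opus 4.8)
The plan is to rerun the standard proof of the complex statement (as in \cite{higson_analytic_2000, roe_positive_2012}), keeping track of the right $C$-action, and to isolate and fix the one place where complex scalars are used.

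To begin, $f\mapsto f(D)$ is a $*$-homomorphism from the real $C^*$-algebra of continuous real functions on $\R$ having limits at $\pm\infty$ (and its closed ideal of those vanishing at $\pm\infty$) to the bounded operators on $H=L^2(S)$, via the spectral calculus of the real self-adjoint operator $D$. Since a $C$-Dirac bundle carries by construction a right $C$-action commuting with $D$, every $f(D)$ is $C$-linear; since $D$ is odd for the grading of $S$, $f(D)$ is even or odd according as $f$ is. This settles the parity and $C$-linearity claims and reduces the lemma to showing $f(D)\in D^*(X)$ for all such $f$ and $f(D)\in C^*(X)$ for the vanishing ones. The corresponding sets of functions are norm-closed $*$-subalgebras (the second a closed ideal), so it suffices to treat a dense subset; and, writing $\chi(x)=x(1+x^2)^{-1/2}$, any $f$ with limits at $\pm\infty$ differs by a function vanishing at $\pm\infty$ from a real combination of $1$ and $\chi$ (coefficients $\tfrac12(f(+\infty)\pm f(-\infty))$), so it is enough to handle the constants (trivial), the functions vanishing at $\pm\infty$, and $\chi$.

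Both nontrivial cases rest on the dense class of functions $f$ with $\widehat f$ supported in some $[-R,R]$, for which $f(D)=\tfrac1{2\pi}\int_{-R}^R\widehat f(t)\,e^{itD}\,dt$. This is \emph{the one place} the complex structure enters, and here is \emph{the one change}: the group $e^{itD}$ does not exist on the real Hilbert space $L^2(S)$. However $\cos(tD)$ and $D^{-1}\sin(tD):=\int_0^t\cos(sD)\,ds$ are honest bounded operators on $L^2(S)$, being real-valued functions of the real self-adjoint $D$, and they solve the wave equation $\ddot u+D^2u=0$, which for the Dirac-type operator $D$ propagates at unit speed (the same energy estimate that yields finite propagation for $e^{itD}$ in the complex case); hence $\cos(tD)$, $D^{-1}\sin(tD)$, and $\sin(tD)=D\cdot D^{-1}\sin(tD)$ (the factor $D$ being a differential, hence local, operator) all have propagation $\le|t|$. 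Taking real parts in the Fourier inversion formula, $f(D)=\tfrac1{2\pi}\int_{-R}^R\bigl(\operatorname{Re}\widehat f(t)\,\cos(tD)-\operatorname{Im}\widehat f(t)\,\sin(tD)\bigr)\,dt$ for real $f$, presents $f(D)$ as a norm-convergent integral of real operators of propagation $\le R$, hence of propagation $\le R$. Equivalently, one may complexify, argue on $L^2(S)\otimes_\R\C$ as usual, and note that $f(D_\C)$ commutes with complex conjugation when $f$ is real, so restricts to $L^2(S)$.

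The remaining, purely local, analysis is the usual one. For $f$ vanishing at $\pm\infty$, $f(D)$ is locally compact: for $f$ of compact support write $f(D)=(1+D^2)^{-k}h(D)$ with $h=(1+x^2)^kf$ bounded and $k$ large, and observe that $\psi(1+D^2)^{-k}$ is compact for $\psi\in C_c(X)$ by elliptic regularity for $D$ and the Rellich lemma; the general case follows by norm approximation. Taking $f$ with compactly supported $\widehat f$, so that $f(D)$ is simultaneously finite-propagation and locally compact, and using density, one gets $f(D)\in C^*(X)$, hence $f(D)\in C^*(X)^C$. Lastly $\chi(D)$ is pseudolocal: from $(1+D^2)^{-1/2}=\tfrac1\pi\int_0^\infty(1+\lambda+D^2)^{-1}\lambda^{-1/2}\,d\lambda$ and the expansion of $[\chi(D),\psi]$ in terms of $[D,\psi]$ (the bounded, order-zero Clifford multiplication by $d\psi$), $[\chi(D),\psi]$ is a norm-convergent integral of compact operators. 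Approximating $\chi$ uniformly by $\rho_\epsilon*\chi$ for an approximate identity $\rho_\epsilon$ with $\widehat{\rho_\epsilon}$ of compact support, the operators $(\rho_\epsilon*\chi)(D)$ have finite propagation and are pseudolocal because $\rho_\epsilon*\chi-\chi$ vanishes at $\pm\infty$ (so $(\rho_\epsilon*\chi-\chi)(D)\in C^*(X)$); hence $(\rho_\epsilon*\chi)(D)\in D^*(X)$ and $\chi(D)=\lim_{\epsilon\to0}(\rho_\epsilon*\chi)(D)\in D^*(X)$, giving $\chi(D)\in D^*(X)^C$. There is no genuine obstacle here: the one substantive point is the one signposted above, and once $e^{itD}$ is replaced by $\cos(tD)$ and $D^{-1}\sin(tD)$ (or one passes to the complexification), the complex proof applies verbatim.
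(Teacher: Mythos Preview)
Your proof is correct and follows essentially the same approach as the paper: the paper's entire argument is that the complex proof goes through verbatim once one replaces the Fourier integral $\int\hat f(t)e^{itD}\,dt$ by $\int u(t)\cos(tD)\,dt-\int v(t)\sin(tD)\,dt$ with $u=\operatorname{Re}\hat f$, $v=\operatorname{Im}\hat f$, which is exactly your ``one change.'' You have simply written out in full the standard complex argument (finite propagation via the wave equation, local compactness via elliptic regularity and Rellich, pseudolocality of $\chi(D)$ via commutator estimates) that the paper cites by reference.
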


Recall that the proof goes via the Fourier decomposition
\[ f(D) = \frac{1}{2\pi} \int \hat{f}(t) e^{itD} dt. \]
The only change needed in the real case is to recall that $\hat{f}(-t) = \bar{\hat{f}}(t)$ for a real-valued $f$, and so to write the integral on the right as
\[ \int u(t) \cos (tD) dt - \int v(t) \sin(tD) dt, \]
where $u$ and $v$ are the real and imaginary parts of $\hat{f}$.  This expression is valid in the real Hilbert space $H$, rather than in its complexification.

It follows that if $\chi$ is a normalizing function, the equivalence class of $\chi(D)$ in the graded algebra $D^*(X)^C/C^*(X)^C$ is a supersymmetry, defining an element of $KO_1$ of this algebra.  But, as observed in~\cite{roe_paschke_2004}, there is an isomorphism
\[ KO_1(D^*(X)^C/C^*(X)^C) \cong KO_{n+1}(C^*(X)/C^*(X)) \]
and this defines the class $[D]\in KO_{n+1}(C^*(X)/C^*(X)) \cong KO^{lf}_n(X)$.

\begin{theorem} If $X$ has uniformly positive scalar curvature then the coarse index, $A[D]$, vanishes in $KO_n(C^*(X))$. \end{theorem}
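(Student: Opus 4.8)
The plan is to mimic the standard complex argument (as in~\cite{higson_analytic_2000, roe_positive_2012}), using the lemmas above to reduce the claim to a statement about the functional calculus of $D$ under a positive scalar curvature hypothesis. Recall that by the Lichnerowicz formula $D^2 = \nabla^*\nabla + \kappa/4$, where $\kappa$ is the scalar curvature, so uniform positivity of $\kappa$ gives a constant $c>0$ with $\langle D^2\xi,\xi\rangle \ge c\|\xi\|^2$ for all $\xi$ in the domain; equivalently, the spectrum of $D$ is disjoint from the interval $(-\sqrt c,\sqrt c)$. First I would fix a normalizing function $\chi$ (odd, real-valued, with $\chi(\pm\infty) = \pm 1$) that is moreover equal to $\sgn$ on the complement of $(-\sqrt c,\sqrt c)$, so that $\chi(D) = \sgn(D)$ exactly, and in particular $\chi(D)^2 = 1$ on the nose, not merely modulo $C^*(X)^C$.

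Next I would invoke the real version of the functional-calculus lemma proved just above: since $\chi$ is a real-valued continuous function on $\R$ with limits at $\pm\infty$, we have $\chi(D)\in D^*(X)^C$, and since $\chi$ is odd, $\chi(D)$ is an odd operator on the graded Hilbert space $H = L^2(S)$. The class $[D]\in KO_{n+1}(C^*(X)/C^*(X))$ — or rather the element of $KO_1(D^*(X)^C/C^*(X)^C)$ giving rise to it through the isomorphism of~\cite{roe_paschke_2004} cited above — is represented by the image of $\chi(D)$ in the quotient $D^*(X)^C/C^*(X)^C$. The coarse assembly map $A$ is precisely the boundary homomorphism for the short exact sequence $\ShortExactSeq{C^*(X)}{D^*(X)}{D^*(X)/C^*(X)}$ (suitably decorated with the $C$-linearity and grading bookkeeping), so $A[D]$ is obtained by the usual index-map recipe: one lifts the supersymmetry $\chi(D)$ from the quotient to $D^*(X)^C$ — but it already lives in $D^*(X)^C$ — and measures the failure of the lift to be a supersymmetry, i.e. one looks at $1 - \chi(D)^2 \in C^*(X)^C$. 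The point is that with our choice of $\chi$, the positive-scalar-curvature hypothesis forces $1 - \chi(D)^2 = 0$ identically. Hence the element whose boundary computes $A[D]$ is already a genuine supersymmetry over $D^*(X)^C$, so it bounds, and $A[D] = 0$ in $KO_n(C^*(X))$.

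The remaining work is to make the index-map computation honest in the graded, $C$-linear, real setting: one must check that the representative $\chi(D)$ of $[D]$ used in the definition of $A$ (via the lemma of Section~2 and the Paschke-duality identifications) is compatible with the one coming from Section~3, and that the composite of the various $KO$-theory isomorphisms — $KO_1(D^*(X)^C/C^*(X)^C)\cong KO_{n+1}(D^*(X)/C^*(X))\cong KO^{lf}_n(X)$ and the boundary map for $\ShortExactSeq{C^*(X)}{D^*(X)}{D^*(X)/C^*(X)}$ — carries the class of the honest involution $\sgn(D)$ to zero. Since a self-adjoint odd involution in $D^*(X)^C$ splits $H$ into a direct sum of graded subspaces preserved by the algebra, the associated class in $KO_1(D^*(X)^C)$ is trivial, and naturality of the boundary map does the rest.

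The main obstacle I anticipate is purely bookkeeping rather than conceptual: keeping straight the interplay between the $\Z/2$-grading on $H$, the commuting right $C = \Cliff(\R^{n,0})$-action, and the Clifford-index shift that converts $KO_1$ of the ungraded-looking quotient into $KO_{n+1}$. One has to be careful that ``supersymmetry'' is interpreted with respect to the correct grading and that $\sgn(D)$ really represents the zero class and not, say, a class differing from zero by the image of a generator of $KO_*(\text{pt})$ — but this is exactly the content of the observation that an odd self-adjoint involution makes the relevant Kasparov/$KO$ cycle degenerate, which goes through verbatim over $\R$ since the real functional-calculus lemma above guarantees everything stays inside the real Hilbert space. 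No genuinely new analytic input beyond that lemma and the Lichnerowicz estimate is required.
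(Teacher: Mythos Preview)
Your proposal is correct and follows essentially the same line as the paper: the Lichnerowicz formula gives a spectral gap, one chooses a normalizing function $\chi$ equal to $\pm 1$ on the spectrum of $D$, so $\chi(D)$ is already a supersymmetry in $D^*(X)^C$, and hence the class $[\chi(D)]\in KO_1(D^*(X)^C/C^*(X)^C)$ lies in the image of $KO_1(D^*(X)^C)$, whence its boundary $A[D]$ vanishes by exactness. Your third paragraph's additional concern---showing that the lifted class in $KO_1(D^*(X)^C)$ is itself trivial---is unnecessary: exactness of the long $KO$-sequence already gives $A[D]=0$ as soon as the quotient class lifts, regardless of whether the lift is zero.
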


\begin{proof} From the above discussion and the long exact sequence in $KO$-theory, it suffices to show that the class of $\chi(D)$ in $KO_1(D^*(X)^C/C^*(X)^C)$ comes from some class in $KO_1(D^*(X)^C)$.  But, using the scalar curvature condition, we see that $D$ has a spectral gap near zero.  We may therefore choose a normalizing function $\chi$ which is equal to $\pm 1$ on the spectrum of $D$.  The operator $F=\chi(D)$ is then a supersymmetry in $D^*(x)^C$ defining the desired class. \end{proof}

\section{The coarse Baum-Connes conjecture}

One version of the coarse Baum-Connes conjecture is the following

\begin{conjecture}\label{cbc1} Suppose that the proper metric space $X$ is \emph{uniformly contractible} \cite{roe_coarse_1993}.  Then the coarse assembly map $A$, from the $K$-homology of $X$ to the $K$-theory of $C^*(X)$, is an isomorphism. \end{conjecture}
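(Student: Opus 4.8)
The plan is to factor the assembly map through coarse $K$-homology and to treat the two resulting comparison maps separately. First I would replace the locally finite $KO$-homology of $X$ by the coarse $KO$-homology $KX_*(X)$, the direct limit over the scale $d$ of the groups $KO_*(P_d(X))$, where $P_d(X)$ is the Rips complex at scale $d$ (the nerve of an anti-\v{C}ech system of uniformly bounded covers). The coarse assembly map then factors as
\[ KO_i^{lf}(X) \xrightarrow{c} KX_i(X) \xrightarrow{\mu} KO_i(C^*(X)), \]
and it suffices to prove that each of $c$ and $\mu$ is an isomorphism. The advantage of this factorization is that it separates a purely geometric/homotopy-theoretic comparison ($c$) from the genuinely analytic content ($\mu$).

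For the first map $c$, the uniform contractibility hypothesis is exactly what is needed. If $X$ is uniformly contractible \cite{roe_coarse_1993}, then for each scale $d$ there is a uniformly continuous coarse equivalence $P_d(X)\to X$ whose composites with the inclusions $X\hookrightarrow P_d(X)\hookrightarrow P_{d'}(X)$ are controlled-homotopic to the canonical maps. I would use these to build a homotopy inverse to $c$ at the level of the anti-\v{C}ech colimit, so that $c$ is an isomorphism. This step should go through cleanly, and the real-versus-complex distinction is irrelevant here: the comparison is a statement about $KO$-homology of the individual spaces $P_d(X)$ and is handled exactly as in the complex case.

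The substance of the conjecture lies in the second map $\mu$. Here I would attempt a coarse Mayer--Vietoris induction: decompose $X$ coarsely into pieces, obtain compatible Mayer--Vietoris sequences for $KX_*$ and for $KO_*(C^*(-))$, and apply the five lemma, reducing the assembly isomorphism for $X$ to the assembly isomorphism for simpler pieces. To make such an induction terminate one needs a geometric scaffolding — finite asymptotic dimension together with bounded geometry, so that the induction runs on the dimension — or else a coarse embedding of $X$ into Hilbert space, in which case one would instead run Yu's argument, constructing a Bott map and a Dirac--dual-Dirac pairing in the $KO$-graded setting and proving that $\mu$ is split injective and surjective by a localization/finite-propagation approximation.

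The hard part, and in fact the place where the statement as worded resists proof, is precisely this analytic half. Uniform contractibility controls the small-scale homotopy type but imposes no bound on the local complexity of the Rips complexes and no constraint on how $X$ sits at infinity, so none of the above inductions can be started from it alone. Without an auxiliary large-scale hypothesis such as bounded geometry with finite asymptotic dimension, or coarse embeddability into Hilbert space, the map $\mu$ need not be an isomorphism: the expander counterexamples of Higson--Lafforgue--Skandalis obstruct surjectivity, and the flexibility of uniformly contractible manifolds exhibited by Dranishnikov--Ferry--Weinberger shows that uniform contractibility is genuinely too weak to force $\mu$ to be iso. An honest treatment therefore must either strengthen the hypothesis on $X$ or retreat to the weaker conclusion that $A$ is injective, which is governed by the descent principle and holds in far greater generality.
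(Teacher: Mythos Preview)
There is no proof in the paper for you to be compared against: the statement is explicitly a \emph{Conjecture}, and the paper makes no attempt to prove it. On the contrary, the paper's closing Remark notes that ``neither version of the coarse Baum-Connes conjecture holds for all spaces,'' citing the counterexamples of Higson, Lafforgue and Skandalis. So the exercise of supplying a proof is ill-posed, and you have correctly diagnosed this: your proposal ends by observing that uniform contractibility alone cannot force the analytic map $\mu$ to be an isomorphism, and you name precisely the expander counterexamples and the Dranishnikov--Ferry--Weinberger examples that obstruct the general statement.

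Your factorization
\[
KO_i^{lf}(X) \xrightarrow{c} KX_i(X) \xrightarrow{\mu} KO_i(C^*(X))
\]
is the standard one, and it is exactly the shape the paper uses when it passes to the ``more general'' version of the coarse Baum-Connes conjecture later in the same section (via anti-\v{C}ech nerves and direct limits). Your treatment of $c$ under uniform contractibility is correct and is the one place where that hypothesis does real work. Your discussion of $\mu$ --- coarse Mayer--Vietoris under finite asymptotic dimension, or Yu's Dirac--dual-Dirac argument under coarse embeddability into Hilbert space --- accurately surveys the known sufficient conditions, none of which follows from uniform contractibility. In short, you and the paper are in agreement: this is a conjecture, known to fail without further hypotheses, and there is nothing to prove as stated.
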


Notice that this conjecture has both a real version (for $KO$) and a complex version (for $KU$).

\begin{proposition} The complex version of Conjecture~\ref{cbc1} for a given space $X$ implies the real version for the same space. \end{proposition}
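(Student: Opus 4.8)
The plan is to convert the statement into a vanishing assertion about the $K$-theory of the single algebra $D^*(X)$, and then to pass from complex vanishing to real vanishing using the nilpotence of the Hopf element $\eta$. First I would reformulate both forms of the conjecture. By the construction of Section~2, the real assembly map $A$ is, under the Paschke duality isomorphism $KO_i^{lf}(X)\cong KO_{i+1}(D^*(X)/C^*(X))$, simply the connecting homomorphism $\partial$ of the extension $0\to C^*(X)\to D^*(X)\to D^*(X)/C^*(X)\to 0$ in $KO$-theory; the complex assembly map has the same description in $KU$-theory (this is one of its standard descriptions). A short chase in the long exact sequence of this extension shows that $\partial$ is an isomorphism in every degree exactly when the $K$-theory of $D^*(X)$ vanishes in every degree. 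Hence, writing $B=D^*(X)$ for the \emph{real} $C^*$-algebra and recalling (from the remark at the start of Section~2) that its complexification is the usual complex $D^*(X)$, the complex version of Conjecture~\ref{cbc1} for $X$ says precisely that $KU_*(B)=0$, and the real version says precisely that $KO_*(B)=0$; so it suffices to prove that, for any real $C^*$-algebra $B$, vanishing of $KU_*(B)$ in all degrees forces vanishing of $KO_*(B)$ in all degrees.

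For this I would invoke Wood's exact sequence relating the real and complex $K$-theory of a real $C^*$-algebra (standard in the $K$-theory of real $C^*$-algebras; see, for instance, Boersema's work on united $K$-theory), namely the natural long exact sequence
\[ \cdots \to KO_{n-1}(B)\xrightarrow{\eta} KO_n(B)\xrightarrow{c} KU_n(B)\xrightarrow{\partial} KO_{n-2}(B)\xrightarrow{\eta} KO_{n-1}(B)\to\cdots, \]
in which $c$ is complexification and the arrows labelled $\eta$ are multiplication by the generator $\eta\in KO_1(\R)=\pi_1(KO)$ for the natural $\pi_*(KO)$-module structure on $KO_*(B)$. If $KU_n(B)=0$ for all $n$, then exactness forces each map $\eta\colon KO_{n-1}(B)\to KO_n(B)$ to be an isomorphism; composing three of them, multiplication by $\eta^3$ is an isomorphism of $KO_{n-3}(B)$ onto $KO_n(B)$. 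But $\eta^3=0$ in $\pi_3(KO)$, so this isomorphism is the zero map, and therefore $KO_n(B)=0$ for every $n$. This proves the Proposition.

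Both main steps are essentially formal, and the parts that need genuine care are the bookkeeping ones, which I would spell out rather than leave to the reader: that the real assembly map really does coincide, compatibly with complexification, with the boundary map $\partial$ under Paschke duality, so that the reformulation of the first step is legitimate simultaneously in $KO$ and in $KU$; and that Wood's exact sequence, together with the $\pi_*(KO)$-module structure it uses, is genuinely available in operator $K$-theory for real $C^*$-algebras (as provided by the references above). Once those are in place, the entire content of the argument is the observation that a nilpotent element of $\pi_*(KO)$ cannot act invertibly on a nonzero module over it.
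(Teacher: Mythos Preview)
Your proof is correct. Both you and the paper begin by reducing the statement to the assertion that vanishing of $KU_*(D^*(X))$ forces vanishing of $KO_*(D^*(X))$; the paper then simply invokes Karoubi's descent lemma~\cite{karoubi_descent_2001} applied to the algebra $D^*(X)$ (comparing~\cite{baum_baum-connes_2004}) as a black box, whereas you prove this implication directly via the Wood long exact sequence and the nilpotence of $\eta\in\pi_1(KO)$. Your route is more self-contained and makes explicit exactly what is being used---indeed, the $\eta^3=0$ argument is one of the standard ways to establish results of Karoubi-descent type---while the paper's version is terser and defers all of the content to the literature. The reformulation step you spell out carefully, that bijectivity of the boundary map in every degree is equivalent to $K_*(D^*(X))=0$, is left implicit in the paper's one-line proof but is precisely what justifies applying descent to the single algebra $D^*(X)$.
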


\begin{proof} This follows from Karoubi's descent lemma~\cite{karoubi_descent_2001} applied to the algebra $D^*(X)$.  Compare \cite{baum_baum-connes_2004}. \end{proof}

A different version of the coarse Baum-Connes conjecture does not involve the uniform contractibility hypothesis.  This version needs the notion of \emph{coarse homology}: the (real or complex) coarse $K$-homology of a proper metric space $X$ is the direct limit of the coarse $K$-homology groups of $X_n$, where $X_n$ are the nerves of uniform covers forming an anti-Cech sequence for $X$.  The spaces $X_n$ are all coarsely equivalent to one another, and to $X$, and thus the coarse assembly maps for $X_n$ pass to a limit to give an assembly map
\[ KX_*(X) \to K_*(C^*(X)), \]
where once again the letter $K$ may refer to real or complex $K$-theory.  Once again, see~\cite{roe_coarse_1993,higson_analytic_2000} for more details on this.

A more general version of the coarse Baum-Connes conjecture would then claim that the coarse assembly map
\[ KX_*(X) \to K_*(C^*(X)) \]
is an isomorphism for every proper metric space $X$.  Once again, the complex version of this conjecture implies the real version.  To see this, form direct sequences of $C^*$-algebras
\[ C^*(X_1)\to C^*(X_2)\to\ldots, \quad D^*(X_1)\to D^*(X_2)\to\ldots \]
and let $C$ and $D$ denote their direct limits.  There is an exact sequence
\[ 0 \to C \to D \to D/C\to 0, \]
since the direct limit functor is exact.  Because $K$-theory commutes with direct limits, the more general coarse Baum-Connes conjecture is equivalent to the statement that $D$ has zero $K$-theory.   By applying Karoubi's descent lemma~\cite{karoubi_descent_2001} to this algebra, we see that the complex version of this coarse Baum-Connes conjecture implies the corresponding real version.

\begin{remark} It should be noted that neither version of the coarse Baum-Connes conjecture holds for all spaces~\cite{higson_counterexamples_2002}.   What is asserted is that a \emph{particular} space $X$ will satisfy the real version if it satisfies the complex version. \end{remark}

\end{document}